%
%
%

\documentclass[a4paper,11pt]{article}

\newtheorem{theorem}{Theorem}[section]

\newtheorem{corollary}[theorem]{Corollary}

\newtheorem{lemma}[theorem]{Lemma}

\newtheorem{proposition}[theorem]{Proposition}

\newenvironment{proof}[1][Proof]{\textbf{#1.} }{\ \rule{0.5em}{0.5em}}


\usepackage{amsfonts}
\usepackage{graphicx}
\usepackage{amsmath}
\usepackage[matrix,arrow]{xy}


\begin{document}

\title{Walks on Unitary Cayley Graphs and Applications}

\author{Elias Cancela, Daniel A. Jaume, Adri\'{a}n Pastine and Denis Videla\thanks{%
Department of Mathematics, Facultad de Ciencias
F\'{i}sico-Matem\'{a}ticas y Naturales, Universidad Nacional de San
Luis, Ej\'{e}rcito de los Andes 950, 5700 San Luis,
Argentina.\newline E-mail: djaume@unsl.edu.ar\newline
Fax: 54-2652-444014} \\
Universidad Nacional de San Luis}
\date{{\Large  Submitted to: The Electronic Journal of Combinatorics, on \today}\\
\bigskip \bigskip \bigskip \bigskip \bigskip \noindent \textbf{Key words: }%
Sums of units, Unitary Cayley Graphs, Walks.\\
\bigskip \noindent \textbf{AMS subject classification: 05C25, 05C50.\quad
\qquad \qquad \qquad \\
\bigskip \noindent }}

\maketitle

\begin{abstract}
In this paper, we determine an explicit formula for the number of
walks in $X_n = \textsf{Cay}( \mathbb{Z}_n,\mathbb{U}_n )$, the
unitary Cayley Graphs of order $n$, between any pair of its
vertices. With this result, we give the number of representations of
a fixed residue class $\bmod{}n$ as the sum of $k$ units of
$\mathbb{Z}_n$.
\end{abstract}

\section{Introduction}

Let $\Gamma$ be a multiplicative group with identity $1$. For
$S\subset\Gamma$, $1\notin S$ and $S^{-1}=\{s^{-1}:s\in S\}=S$ the
\textit{Cayley Graph} $X=\textrm{Cay}\left(\Gamma,S\right)$ is the
undirected graph having vertex set $V\left(X\right)=\Gamma$ and edge
set $E\left(X\right)=\{{a,b}:ab^{-1}\in S\}$. By right
multiplication $\Gamma$ may be considered as a group of
automorphisms of $X$ acting transitively on $V(X)$. The Cayley graph
$X$ is regular of degree $|S|$. Its connected components are the
right cosets of the subgroup generated by $S$. So $X$ is connected,
if $S$ generates $\Gamma$. More information about Cayley graph can
be found in books on algebraic graph theory like those written by
Biggs \cite{B} and by Godsil and Royle \cite{GR}.

For a positive integer $n>1$ the \textit{unitary Cayley graph}
$X_{n}=\textrm{Cay}(\mathbb{Z}_{n},U_{n})$ is defined by the
additive group of the ring $\mathbb{Z}_{n}$ of integers modulo $n$
and the multiplicative group $\mathbb{U}_{n}$ of its units. If we
represent the elements of $\mathbb{Z}_{n}$ by the integers
$0,1,\ldots,n-1$ then it is well known that $U_{n}=\{r \in
\mathbb{Z}_{n}: \textrm{gcd}(r,n)=1\}$. So $X_{n}$ has vertex set
$V(X_{n})=\mathbb{Z}_{n}=\{0,1,\ldots,n-1\}$ and edge set
$E(X_{n})=\{\{a,b\}:a,b \in \mathbb{Z}_{n},
\textrm{gcd}(a-b,n)=1\}$.

Klotz and Sander in \cite{KS} show  that a graph $X_{n}$ is regular
of degree $|\mathbb{U}_{n}|=\varphi(n)$, where $\varphi(n)$ denotes
the Euler totient function. Let $p$ be a prime number, then
$X_{p}=K_{p}$ (the complete graph on $p$ vertices). Let $\alpha$ be
a positive integer, the $X_{p^{\alpha}}$ is a complete $p$-partite
graph which has the residue classes modulo $p$ in $\mathbb{Z}_{n}$
as maximal sets of independent vertices. Unitary Cayley graphs are
highly symmetric. They have some remarkable properties connecting
graph theory and number theory (for example it can be proved that
$\varphi(n)$ is even, for $n>2$, via a graph theory argument, using
unitary Cayley graphs).



\section{Walks in Complete Graphs}

Let $u$ and $v$ be (not necessarily distint) vertices of a graph
$X$. A $u-v$ \textit{walk} of $X$ is a finite, alternating sequence

\begin{displaymath}
u=u_0,e_1,u_1,e_2,\ldots,u_{k-1},e_k,u_k=v
\end{displaymath}
of vertices and edges, beginning with vertex $u$ and ending with
vertex $v$, such that $e_i=u_{i-1}u_i$ for $i=1,2,\ldots,k$. The
number $k$ is called the \textit{length} of the walk. A
\textit{trivial walk} contains no edges, that is, $k=0$. We note
that there may be repetition of vertices and edges in a walk. We
often will indicate only the edges of a walk.

A $u-v$ walk is \textit{closed} or \textit{open} depending on
whether $u=v$ or $u\neq v$.

We denote with $w(X,k,v_i,v_j)$ the total number of walk of length
$k$ between the vertices $v_i$ and $v_j$ of a given graph $X$.

In this section we are going to count the number of $k$-walks in
$K_n$, the complete graph of $n$ vertices.

 The \textit{adjacency matrix $A(X)$} of a graph $X$ is the
integer matrix with rows and columns indexed by the vertices of $X$,
such that the $uv$-entry of $A(X)$ is 1 if $u$ and $v$ are
neighbors, and 0 otherwise, so $A(X)$ is a symmetric 01-matrix.
Because a graph has no loops, the diagonal entries of $A(X)$ are
zero.

We will uses the next well known result:

\begin{theorem}\label{A}
If $A$ is the adjacency matrix of a graph $X$ with set of vertices
$V(X)=\{v_1,v_2,\ldots,v_n\}$, then the $(i,j)$ entry of $A^k$,
$k\geq1$, is the number of different $v_i-v_j$ walks of length $k$
in $X$.
\end{theorem}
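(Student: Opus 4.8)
The plan is to prove Theorem \ref{A} by induction on the walk length $k$. The statement is a bijective bookkeeping fact, so the work is in setting up the correspondence between walks and matrix-product terms cleanly; there is no real analytic or combinatorial difficulty.

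For the base case $k=1$, I would observe directly that $A^1 = A$, and that by definition the $(i,j)$ entry of $A$ is $1$ exactly when $v_i$ and $v_j$ are adjacent and $0$ otherwise. A $v_i$--$v_j$ walk of length $1$ is precisely an edge joining $v_i$ and $v_j$, and since $X$ is a simple graph there is at most one such edge; hence the number of length-$1$ walks equals $A_{ij}$, as required.

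For the inductive step, assume the claim holds for some $k\geq 1$, i.e.\ $(A^k)_{i\ell}$ is the number of $v_i$--$v_\ell$ walks of length $k$ for all $i,\ell$. Writing the matrix product out gives $(A^{k+1})_{ij} = \sum_{\ell=1}^{n} (A^k)_{i\ell}\,A_{\ell j}$. The key step is to establish a bijection: every $v_i$--$v_j$ walk of length $k+1$ decomposes uniquely as a $v_i$--$v_\ell$ walk of length $k$ (its initial segment, where $v_\ell$ is the second-to-last vertex) followed by the edge $v_\ell v_j$; conversely, any $v_i$--$v_\ell$ walk of length $k$ together with an edge $v_\ell v_j$ concatenates to a $v_i$--$v_j$ walk of length $k+1$. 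Thus, for a fixed $\ell$, the number of length-$(k+1)$ walks whose penultimate vertex is $v_\ell$ is $(A^k)_{i\ell}$ if $v_\ell v_j$ is an edge and $0$ otherwise, i.e.\ exactly $(A^k)_{i\ell}\,A_{\ell j}$. Summing over all possible penultimate vertices $v_\ell$ yields $(A^{k+1})_{ij}$, completing the induction.

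The only point requiring any care — the ``main obstacle,'' such as it is — is verifying that the decomposition of a length-$(k+1)$ walk into (length-$k$ walk, final edge) is genuinely a bijection onto the pairs counted by the sum; this hinges on the fact that a walk records its edges as $e_i = u_{i-1}u_i$, so the penultimate vertex $v_\ell$ and the final edge are uniquely determined, and no walk is counted twice or omitted. Once this is spelled out, the rest is the routine expansion of the matrix product.
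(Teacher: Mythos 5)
Your induction argument is correct and complete: the base case $k=1$ is immediate from the definition of the adjacency matrix, and your decomposition of a length-$(k+1)$ walk into its initial length-$k$ segment plus the final edge, matched against the expansion $(A^{k+1})_{ij}=\sum_{\ell}(A^k)_{i\ell}A_{\ell j}$, is exactly the standard proof of this fact. Note that the paper itself offers no proof of Theorem \ref{A} — it simply invokes it as a well-known result — so your write-up supplies a proof the paper omits, and it is the canonical one found in standard references on algebraic graph theory.
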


In particular, we are interesting in the number of walks in complete
graphs. As $A(K_n)$ is the $n\times n$-matrix
\begin{displaymath}
A(K_n)= \textsf{circ}\left(0,1,\dots,1 \right)
\end{displaymath}
where $\textsf{circ}(0,1,\dots,1)$ is the circulant matrix whose
first row is the $n$-vector (0,1,\dots,1), see Davis \cite{D}. As
the circulant matrices are closed under product, we have that
\begin{displaymath}
A(K_n)^2= \textsf{circ} \left(n-1,n-2,\dots,n-2 \right)
\end{displaymath}
And in general
\begin{displaymath}
A(K_n)^k= \textsf{circ}\left(a_{n,k},b_{n,k},\dots, b_{n,k}
\right)
\end{displaymath}
where $a_{n,1}=0 $ $b_{n,1}=1$ and for $k\geq 2$
\begin{align}
a_{n,k} & = (n-1)b_{n,k-1} \nonumber \\
b_{n,k} & = a_{n,k}+(-1)^{k-1} \nonumber
\end{align}
Thus we have the following recursive relation
\begin{displaymath}
b_{n,k}=(n-1)b_{n,k-1}+(-1)^{k-1}
\end{displaymath}
which has the following closed form
\begin{displaymath}
b_{n,k}=\frac{1}{n}\left( \left(n-1 \right)^k-\left( -1 \right)^k
\right) \qquad \textrm{for} \qquad k \geq 0
\end{displaymath}

Therefore, we have the following result.
\begin{proposition}
The number of $v_i-v_j$ walks of length $k$ in a complete graphs
$K_n$ is
\begin{displaymath}
w(K_n,k,v_i,v_j):= \left\{
\begin{array} {ll}
\frac{1}{n}\left( \left(n-1 \right)^k-\left( -1 \right)^k \right) &
\textrm{if } v_i \neq v_j \\
\frac{n-1}{n}\left( \left(n-1 \right)^{k-1}-\left( -1 \right)^{k-1}
\right) & \text{if } v_i=v_j
\end{array}
\right.
\end{displaymath}

\end{proposition}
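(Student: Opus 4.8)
The plan is to combine Theorem~\ref{A} with the circulant description of $A(K_n)^k$ already recorded above. By Theorem~\ref{A}, $w(K_n,k,v_i,v_j)$ is the $(i,j)$ entry of $A(K_n)^k$, and since $A(K_n)^k = \textsf{circ}(a_{n,k}, b_{n,k}, \dots, b_{n,k})$ this entry equals $a_{n,k}$ when $v_i = v_j$ and $b_{n,k}$ when $v_i \neq v_j$. So the Proposition reduces to establishing the two closed forms for $a_{n,k}$ and $b_{n,k}$ claimed above, and then substituting.

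First I would justify the circulant shape of $A(K_n)^k$ and the recursion. Writing $A(K_n) = J - I$, where $J$ is the all-ones matrix, a direct computation shows that circulants of the form $\textsf{circ}(p, q, \dots, q)$ are closed under right multiplication by $A(K_n)$, with $\textsf{circ}(p, q, \dots, q)\,A(K_n) = \textsf{circ}\big((n-1)q,\ p + (n-2)q,\ \dots,\ p + (n-2)q\big)$; together with $A(K_n) = \textsf{circ}(0,1,\dots,1)$ this gives, inductively, $A(K_n)^k = \textsf{circ}(a_{n,k}, b_{n,k}, \dots, b_{n,k})$ with $a_{n,1} = 0$, $b_{n,1} = 1$, and the recursions $a_{n,k} = (n-1)b_{n,k-1}$ and $b_{n,k} = a_{n,k-1} + (n-2)b_{n,k-1}$ for $k \geq 2$. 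Subtracting these gives $b_{n,k} - a_{n,k} = a_{n,k-1} - b_{n,k-1}$, which telescopes (base value $b_{n,1} - a_{n,1} = 1$) to $b_{n,k} - a_{n,k} = (-1)^{k-1}$, hence $b_{n,k} = a_{n,k} + (-1)^{k-1} = (n-1)b_{n,k-1} + (-1)^{k-1}$, the single recursion stated above.

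Next I would confirm the closed form $b_{n,k} = \tfrac1n\big((n-1)^k - (-1)^k\big)$ by induction on $k$: it holds at $k = 1$, and inserting the inductive hypothesis into the recursion yields $(n-1)\cdot\tfrac1n\big((n-1)^{k-1} - (-1)^{k-1}\big) + (-1)^{k-1} = \tfrac1n\big((n-1)^k + (-1)^{k-1}\big) = \tfrac1n\big((n-1)^k - (-1)^k\big)$, closing the induction. Then $a_{n,k} = (n-1)b_{n,k-1} = \tfrac{n-1}{n}\big((n-1)^{k-1} - (-1)^{k-1}\big)$, and substituting these two expressions into the case distinction of the first paragraph gives precisely the displayed formula for $w(K_n,k,v_i,v_j)$. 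A quicker alternative is to expand $(J - I)^k$ by the binomial theorem using $J^2 = nJ$, which delivers both closed forms in one step.

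There is no real obstacle here; the argument is a short induction plus the observation that powers of $A(K_n)$ stay in the family $\textsf{circ}(p,q,\dots,q)$. The only points worth a second look are the base cases --- in particular $k = 1$, where the diagonal formula rightly returns $0$ since $K_n$ is loopless, and $k = 0$ if one wants to include the trivial walk --- and taking care to genuinely \emph{derive} the closed form of $b_{n,k}$ from the recursion rather than merely stating the two together.
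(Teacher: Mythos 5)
Your proposal is correct and follows essentially the same route as the paper: apply Theorem~\ref{A}, observe that $A(K_n)^k$ stays of the form $\textsf{circ}(a_{n,k},b_{n,k},\dots,b_{n,k})$, derive the recursion $b_{n,k}=(n-1)b_{n,k-1}+(-1)^{k-1}$, and verify the closed form. If anything, you supply details the paper leaves implicit (the derivation of $b_{n,k}=a_{n,k}+(-1)^{k-1}$ by telescoping and the induction confirming the closed form), which is a welcome tightening rather than a different argument.
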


This result is known, see \cite{St}, but the proof given here is
different.



\section{Walks on $X_{\textsf{rad}(n)}$}

Given a positive integer $n$, we are going to count the number of
$k$-walks in $X_{\textsf{rad}(n)}$, the unitary Cayley graph of
$\textsf{rad}(n)$ vertices, where $\textsf{rad}(n)$ is the product
of all primes that divide $n$, i.e., $\textsf{rad}(n)=\prod_{p|n}p$.
So, we are going to resolve the problem for square-free integers.

Let $X_1=(V_1,E_1)$ and $X_2=(V_2,E_2)$ be graphs. The
\textit{Kronecker product} of $X_1$ and $X_2$ is the graph $X=(V,E)$
denoted by $X_1 \otimes X_2$ (also known as direct product,
categorical product, etc.) where $V=V_1 \times V_2$, the Cartesian
product of $V_1$ and $V_2$, with $\left( u_1, u_2 \right)$ and
$\left( v_1, v_2 \right)$ are adjacent in $X$ if and only if $u_1$
and $v_1$ are adjacent in $X_1$ and $u_2$ and $v_2$ are adjacent in
$X_2$. See \cite{HIK}.

A direct consequence of the definition of Kronecker product of
graphs is that a $k$-walk of $G \otimes H$ is the Kronecker product
of a $k$-walk of $G$ times a $k$-walk of $H$. Conversely, the
Kronecker product of a $k$-walk of $G$ times a $k$-walk of $H$ give
us a $k$-walk of $G \otimes H$. Summarizing

\begin{lemma}\label{LCK}
Given $n$  graphs $H_i$, let us consider the Kronecker product of
all them
\begin{displaymath}
G= \bigotimes_{i=1}^{n}H_i
\end{displaymath}
Any vertex $x$ of $G$ have the form $x=(x_1,x_2,\dots,x_n)$ where
$x_i \in V(H_i)$. Then the number of walk of length $k$ between any
two vertices $x$ and $y$ of $G$ is
\begin{displaymath}
w(G,k,x,y)=\prod_{i=1}^{n} w(H_i,k,x_i,y_i)
\end{displaymath}
\end{lemma}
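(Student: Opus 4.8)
The plan is to make precise the correspondence sketched in the paragraph preceding the statement, prove it for two factors, and then induct on the number of factors $n$. First I would fix graphs $G$ and $H$ and observe that, since the graphs here are simple, a walk is determined by its vertex sequence. Write a walk $W$ of length $k$ in $G \otimes H$ as $z^{(0)}, z^{(1)}, \dots, z^{(k)}$ with $z^{(t)} = (g^{(t)}, h^{(t)})$. By the definition of the Kronecker product, $z^{(t-1)}$ is adjacent to $z^{(t)}$ in $G \otimes H$ if and only if $g^{(t-1)}$ is adjacent to $g^{(t)}$ in $G$ and $h^{(t-1)}$ is adjacent to $h^{(t)}$ in $H$. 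Hence $W$ is a walk in $G \otimes H$ exactly when $g^{(0)}, \dots, g^{(k)}$ is a walk in $G$ and $h^{(0)}, \dots, h^{(k)}$ is a walk in $H$; and $W$ runs from $(x_1,x_2)$ to $(y_1,y_2)$ exactly when its first projection runs from $x_1$ to $y_1$ and its second from $x_2$ to $y_2$. Thus $W \mapsto (\pi_1 W, \pi_2 W)$ is a bijection from the set of $(x_1,x_2)$–$(y_1,y_2)$ walks of length $k$ in $G \otimes H$ onto the Cartesian product of the set of $x_1$–$y_1$ walks of length $k$ in $G$ with the set of $x_2$–$y_2$ walks of length $k$ in $H$, with inverse given by interleaving the two vertex sequences coordinatewise. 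Comparing cardinalities gives $w(G \otimes H, k, (x_1,x_2),(y_1,y_2)) = w(G,k,x_1,y_1)\, w(H,k,x_2,y_2)$.

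Next I would induct on $n$. The cases $n=1$ and $n=2$ are immediate (the latter being the paragraph above). For the inductive step, use that the Kronecker product of graphs is associative up to the obvious isomorphism, so $G = \bigotimes_{i=1}^{n} H_i = \left(\bigotimes_{i=1}^{n-1} H_i\right) \otimes H_n$ is well defined and its vertices really are the tuples $(x_1,\dots,x_n)$. Applying the $n=2$ case once and then the induction hypothesis to $\bigotimes_{i=1}^{n-1} H_i$,
\[
w(G,k,x,y) = w\!\left(\bigotimes_{i=1}^{n-1} H_i,\, k,\, (x_1,\dots,x_{n-1}),(y_1,\dots,y_{n-1})\right) w(H_n,k,x_n,y_n) = \prod_{i=1}^{n} w(H_i,k,x_i,y_i),
\]
which is the claim.

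Alternatively — and this may be the cleanest route, since Theorem \ref{A} is already in hand — one can work purely with adjacency matrices: $A\!\left(\bigotimes_{i=1}^n H_i\right) = \bigotimes_{i=1}^n A(H_i)$ as a Kronecker product of matrices, so by the mixed-product rule $(A\otimes B)(C\otimes D) = (AC)\otimes(BD)$ one gets $A(G)^k = \bigotimes_{i=1}^n A(H_i)^k$; the entry of this matrix indexed by the pair of tuples $x=(x_1,\dots,x_n)$, $y=(y_1,\dots,y_n)$ factors as $\prod_i \big(A(H_i)^k\big)_{x_i,y_i}$, and Theorem \ref{A} identifies each factor with $w(H_i,k,x_i,y_i)$ and the product with $w(G,k,x,y)$.

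I do not expect a genuine obstacle; the only points needing care are bookkeeping ones: that ``adjacent in the product iff adjacent in every coordinate'' is exactly the stated definition, that associativity of the graph Kronecker product legitimizes $\bigotimes_{i=1}^n H_i$ and its vertex tuples, and that in simple graphs a length-$k$ walk is recorded faithfully by its vertex sequence so that the interleaving map is well defined. None of these is deep, so the proof amounts to the bijection together with the induction.
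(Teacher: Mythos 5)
Your proof is correct and follows essentially the same route as the paper, which simply observes that by the definition of the Kronecker product a length-$k$ walk in the product corresponds exactly to a tuple of length-$k$ walks in the factors; you merely make that bijection explicit and add the (harmless) induction on the number of factors. Your adjacency-matrix alternative via $A(G)^k=\bigotimes_i A(H_i)^k$ and Theorem~\ref{A} is also valid, but it is not needed and is not the paper's argument.
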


For our next lemma we need the following two results of Ramaswamy
and Veena (2009) \cite{RV}
\begin{theorem}[Ramaswamy and Veena]
If $(m,n)=1$, then the Kronecker product of unitary Cayley graphs
$X_m$ and $X_n$ is isomorphic to $X_{mn}$.
\end{theorem}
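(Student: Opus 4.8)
The plan is to produce the isomorphism explicitly via the Chinese Remainder Theorem. Since $\gcd(m,n)=1$, the map
$\phi:\mathbb{Z}_{mn}\to\mathbb{Z}_m\times\mathbb{Z}_n$ given by $\phi(a)=(a\bmod m,\,a\bmod n)$ is an isomorphism of rings, and in particular a bijection from the vertex set $\mathbb{Z}_{mn}$ of $X_{mn}$ onto the vertex set $\mathbb{Z}_m\times\mathbb{Z}_n$ of $X_m\otimes X_n$. The whole argument then reduces to checking that $\phi$ preserves and reflects adjacency.

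The key arithmetic input I would isolate first is that $\phi$ restricts to a group isomorphism $U_{mn}\xrightarrow{\ \sim\ }U_m\times U_n$: an integer $a$ satisfies $\gcd(a,mn)=1$ if and only if $\gcd(a,m)=1$ and $\gcd(a,n)=1$, so $a$ is a unit of $\mathbb{Z}_{mn}$ exactly when both coordinates of $\phi(a)$ are units of $\mathbb{Z}_m$ and $\mathbb{Z}_n$ respectively. This is standard, but it is the real reason the construction works.

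Next, given two vertices $a,b\in\mathbb{Z}_{mn}$, by definition they are adjacent in $X_{mn}$ iff $a-b\in U_{mn}$. Because $\phi$ is a ring homomorphism, $\phi(a-b)=\phi(a)-\phi(b)=\bigl((a-b)\bmod m,\,(a-b)\bmod n\bigr)$, so by the previous paragraph $a-b\in U_{mn}$ iff $(a-b)\bmod m\in U_m$ and $(a-b)\bmod n\in U_n$; that is, iff the first coordinates of $\phi(a)$ and $\phi(b)$ are adjacent in $X_m$ and the second coordinates are adjacent in $X_n$. By the definition of the Kronecker product this is exactly the condition for $\phi(a)$ and $\phi(b)$ to be adjacent in $X_m\otimes X_n$. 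Hence $\phi$ is a graph isomorphism, proving the theorem.

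I do not anticipate a genuine obstacle: once the CRT identification is in place the proof is just unwinding definitions. The one point requiring a little care is keeping straight the difference between the algebraic subtraction in $\mathbb{Z}_{mn}$ and the purely combinatorial adjacency relation defining the Kronecker product — one must verify that the coordinate-wise difference produced by $\phi$ matches the coordinate-wise adjacency used to build $X_m\otimes X_n$, which is precisely what the computation of $\phi(a-b)$ above supplies.
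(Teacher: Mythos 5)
Your proof is correct and complete: the Chinese Remainder Theorem bijection, the fact that it restricts to a bijection of units, and the coordinate-wise adjacency check are exactly what is needed. Note that the paper itself does not prove this statement but quotes it from Ramaswamy and Veena \cite{RV}, and your CRT argument is the standard one used there, so nothing further is required.
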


\begin{corollary}[Ramaswamy and Veena]
If $n=p_{1}^{\alpha_1}p_{2}^{\alpha_2}\cdots p_{m}^{\alpha_m}$, then
the Kronecker product of unitary Cayley graphs $X_{p_{1}^{\alpha_1}}
\otimes X_{p_{2}^{\alpha_2}} \otimes \cdots \otimes
X_{p_{m}^{\alpha_m}}$ is isomorphic to $X_n$
\end{corollary}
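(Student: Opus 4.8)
The plan is to obtain the corollary from the preceding theorem by a short induction on $m$, the number of distinct prime divisors of $n$. The one preliminary fact I would invoke without proof is that the Kronecker product of graphs is associative and commutative up to isomorphism, so that the iterated product $X_{p_1^{\alpha_1}}\otimes X_{p_2^{\alpha_2}}\otimes\cdots\otimes X_{p_m^{\alpha_m}}$ is a well-defined isomorphism class independent of how the factors are grouped or ordered; this is standard (see \cite{HIK}) and is exactly what makes the induction legitimate.

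For the base case $m=1$ there is nothing to do: $n=p_1^{\alpha_1}$ and the product consists of the single factor $X_{p_1^{\alpha_1}}=X_n$. For the inductive step, write $n=n'\cdot p_m^{\alpha_m}$ with $n'=p_1^{\alpha_1}p_2^{\alpha_2}\cdots p_{m-1}^{\alpha_{m-1}}$. Since $p_1,\dots,p_m$ are pairwise distinct primes, $n'$ and $p_m^{\alpha_m}$ have no common prime factor, so $\gcd(n',p_m^{\alpha_m})=1$; this is precisely the hypothesis required by the theorem, which therefore yields $X_{n'}\otimes X_{p_m^{\alpha_m}}\cong X_{n'\cdot p_m^{\alpha_m}}=X_n$. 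By the inductive hypothesis $X_{n'}\cong X_{p_1^{\alpha_1}}\otimes\cdots\otimes X_{p_{m-1}^{\alpha_{m-1}}}$, and since forming the Kronecker product with a fixed graph respects graph isomorphism, taking the product of both sides with $X_{p_m^{\alpha_m}}$ gives $X_n\cong X_{p_1^{\alpha_1}}\otimes\cdots\otimes X_{p_m^{\alpha_m}}$, which closes the induction.

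The only thing that requires any care — and it is bookkeeping rather than a genuine obstacle — is to make sure the coprimality hypothesis of the theorem really does hold at every stage of the induction; it does, precisely because each partial product $p_1^{\alpha_1}\cdots p_{j-1}^{\alpha_{j-1}}$ involves only primes distinct from $p_j$. The substantive work has already been done in the theorem itself, whose proof presumably runs through the Chinese Remainder Theorem identification $\mathbb{Z}_{mn}\cong\mathbb{Z}_m\times\mathbb{Z}_n$ together with the observation that $r$ is a unit modulo $mn$ if and only if its two coordinates are units modulo $m$ and modulo $n$ respectively; under that identification an edge of $X_{mn}$ corresponds exactly to a pair of edges, one in $X_m$ and one in $X_n$, which is the definition of an edge of $X_m\otimes X_n$. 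Granting the theorem, the corollary needs nothing beyond the induction sketched above.
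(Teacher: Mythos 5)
Your induction is correct: the coprimality needed at each step holds because the $p_i$ are distinct primes, and together with the (standard) associativity of the Kronecker product and its compatibility with isomorphism, the corollary follows immediately from the preceding theorem of Ramaswamy and Veena. The paper itself gives no proof, simply quoting the result from \cite{RV}, and your argument is exactly the intended routine derivation of the corollary from that theorem.
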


Now we can given our next result
\begin{lemma}
Given posivite integers $n$ and $k$, for any $0 \leq i,j \leq n$ the
number of $i-j$-walks of length $k$ in $X_{\textsf{rad}(n)}$, i.e.,
$w(X_{\textsf{rad}(n)},k,i,j)$ is
\begin{equation}
\prod_{p\in C(n,i,j)}{\frac{p-1}{p}\left( \left(p-1
\right)^{k-1}-\left( -1 \right)^{k-1} \right)}\prod_{p \in
A(n,i,j)}\frac{1}{p}\left( \left(p-1 \right)^k-\left( -1 \right)^k
\right)
\end{equation}
where
\begin{align}
C(n,i,j)= \{ p \text{ prime }: p|n, {} i=j\bmod p \} \\
A(n,i,j)=\{ p \text{ prime }: p|n, {} i \neq j\bmod p \}
\end{align}
\end{lemma}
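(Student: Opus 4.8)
The strategy is to combine the Ramaswamy–Veena corollary with the Kronecker-product walk-counting formula of Lemma~\ref{LCK}. Write $\textsf{rad}(n) = p_1 p_2 \cdots p_m$ as the product of the distinct primes dividing $n$. Since each $p_t$ is prime, $X_{p_t} = K_{p_t}$, and because the $p_t$ are pairwise coprime, the corollary gives an isomorphism
\begin{displaymath}
X_{\textsf{rad}(n)} \;\cong\; X_{p_1} \otimes X_{p_2} \otimes \cdots \otimes X_{p_m} \;=\; K_{p_1} \otimes K_{p_2} \otimes \cdots \otimes K_{p_m}.
\end{displaymath}
Under this isomorphism, a vertex $i \in \mathbb{Z}_{\textsf{rad}(n)}$ corresponds, via the Chinese Remainder Theorem, to the tuple $(i \bmod p_1, \ldots, i \bmod p_m)$, and similarly for $j$. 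Walk counts are preserved by graph isomorphism, so $w(X_{\textsf{rad}(n)},k,i,j)$ equals the walk count between the corresponding tuples in the Kronecker product.

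Next I would apply Lemma~\ref{LCK} with $H_t = K_{p_t}$ to get
\begin{displaymath}
w(X_{\textsf{rad}(n)},k,i,j) \;=\; \prod_{t=1}^{m} w\bigl(K_{p_t},\,k,\, i \bmod p_t,\, j \bmod p_t\bigr).
\end{displaymath}
Now invoke the Proposition on walks in complete graphs for each factor. The factor for $p_t$ takes the ``diagonal'' value $\frac{p_t-1}{p_t}\bigl((p_t-1)^{k-1}-(-1)^{k-1}\bigr)$ precisely when $i \bmod p_t = j \bmod p_t$, i.e. when $p_t \in C(n,i,j)$, and it takes the ``off-diagonal'' value $\frac{1}{p_t}\bigl((p_t-1)^k-(-1)^k\bigr)$ precisely when $i \bmod p_t \neq j \bmod p_t$, i.e. when $p_t \in A(n,i,j)$. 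Since $\{p_1,\ldots,p_m\}$ is exactly the set of primes dividing $n$, it is partitioned as $C(n,i,j) \cup A(n,i,j)$, and splitting the product over these two sets yields the claimed formula.

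There is no serious obstacle here — the proof is essentially a bookkeeping exercise assembling three prior results. The only point requiring a little care is the identification of vertices across the isomorphism: one must check that the CRT correspondence $i \mapsto (i \bmod p_1, \ldots, i \bmod p_m)$ is indeed the vertex map realizing the Ramaswamy–Veena isomorphism (this is implicit in how their product isomorphism is constructed, since the unitary Cayley graph structure on $\mathbb{Z}_{\textsf{rad}(n)}$ is transported from the product ring $\mathbb{Z}_{p_1} \times \cdots \times \mathbb{Z}_{p_m}$ via CRT), so that ``$i = j \bmod p$'' in the definitions of $C$ and $A$ is the correct condition. Once that is noted, matching the two cases of the Proposition to the two sets $C(n,i,j)$ and $A(n,i,j)$ is immediate.
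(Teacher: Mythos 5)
Your proposal is correct and follows essentially the same route as the paper, which also deduces the lemma directly from Lemma~\ref{LCK}, the Ramaswamy--Veena corollary, and the complete-graph walk formula; you simply spell out the CRT vertex identification and the partition of the prime divisors into $C(n,i,j)$ and $A(n,i,j)$, details the paper leaves implicit.
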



\begin{proof}
Direct from the lemma \ref{LCK} and the previous corollary.
\end{proof}



\section{Relation between $X_n$ and $X_{\textsf{rad}(n)}$}

We use the blow-up notation. Given a graph $G$ and a positive
integer $n$, The blow-up of $G$ of order $r$, denote by $B(G,r)$, is
the graph with set vertex $V(B(G,r))=V(G)\times[r]$, where as usual
$[r]:=\{1,2,\dots,r\}$, and set of edges
\begin{displaymath}
E(B(G,r))=\{ \{(u,a),(v,b)/ \{u,v\}\in E(G), a,b \in [r]\}
\end{displaymath}
i.e., the vertex $(u,a)$ is neighbor of the vertex $(v,b)$ in
$B(G,r)$ if and only if the vertices $v$ and $u$ of $G$ are
neighbors.

\begin{proposition}\label{PWB}
Given positive integers $r$ and $k$, and a graph $G$, the number of
$k$-walks in $B(G,r)$ between its vertices $(u,a)$ and $(v,b)$ is:
\begin{displaymath}
w(B(G,r),k,(u,a),(v,b))=r^{k-1}w(G,k,u,v)
\end{displaymath}
\end{proposition}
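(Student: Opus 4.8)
The plan is to build a projection from walks in $B(G,r)$ onto walks in $G$ and then count the fibres of this map. Fix $k\ge 1$ and vertices $(u,a),(v,b)$ of $B(G,r)$. Given any $k$-walk
\[
(u,a)=(u_0,a_0),(u_1,a_1),\dots,(u_k,a_k)=(v,b)
\]
in $B(G,r)$, deleting the second coordinate of every vertex yields the sequence $u_0,u_1,\dots,u_k$ in $G$. By the definition of the blow-up, $\{(u_{i-1},a_{i-1}),(u_i,a_i)\}\in E(B(G,r))$ forces $\{u_{i-1},u_i\}\in E(G)$, so this sequence is a genuine $k$-walk from $u$ to $v$ in $G$. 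This defines a map $\pi$ from the set of $k$-walks of $B(G,r)$ from $(u,a)$ to $(v,b)$ to the set of $k$-walks of $G$ from $u$ to $v$.

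Next I would show that every fibre of $\pi$ has exactly $r^{k-1}$ elements. Fix a $k$-walk $W=(u_0,\dots,u_k)$ in $G$ with $u_0=u$ and $u_k=v$. An element of $\pi^{-1}(W)$ is precisely a choice of $a_0,\dots,a_k\in[r]$ with $a_0=a$, $a_k=b$, such that each pair $\{(u_{i-1},a_{i-1}),(u_i,a_i)\}$ is an edge of $B(G,r)$. Since $\{u_{i-1},u_i\}\in E(G)$ already holds, this last condition is automatic for \emph{every} choice of the $a_i$. Hence $a_0$ and $a_k$ are forced while $a_1,\dots,a_{k-1}$ range freely over $[r]$, so $|\pi^{-1}(W)|=r^{k-1}$ (with $r^0=1$ when $k=1$, consistent with there being no intermediate vertices). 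Summing over all $k$-walks $W$ of $G$ from $u$ to $v$ gives $w(B(G,r),k,(u,a),(v,b))=r^{k-1}\,w(G,k,u,v)$, as claimed.

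There is essentially no serious obstacle here; the only point needing care is the observation that a lift of a walk of $G$ is always a legitimate walk of $B(G,r)$ no matter how the intermediate $[r]$-coordinates are chosen, which is immediate from the edge definition of the blow-up. Alternatively, one can argue algebraically: $A(B(G,r))=A(G)\otimes J_r$, where $J_r$ is the $r\times r$ all-ones matrix; using the mixed-product property one gets $(A(G)\otimes J_r)^k=A(G)^k\otimes J_r^k$, and since $J_r^k=r^{k-1}J_r$, the $((u,a),(v,b))$-entry of $A(B(G,r))^k$ equals $r^{k-1}(A(G)^k)_{uv}$. By Theorem~\ref{A} applied to both $B(G,r)$ and $G$, this is exactly $r^{k-1}\,w(G,k,u,v)$.
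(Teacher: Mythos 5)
Your proposal is correct and follows essentially the same route as the paper: the paper's proof is exactly the lifting observation that a walk of $G$ becomes a walk of $B(G,r)$ for every choice of the intermediate $[r]$-labels, with the two endpoints fixed, giving $r^{k-1}$ lifts per walk; your write-up merely makes the projection $\pi$ and the fibre count explicit, which tightens the paper's rather terse argument. The additional Kronecker-product computation $A(B(G,r))=A(G)\otimes J_r$ with $J_r^k=r^{k-1}J_r$ is a correct independent verification, but not needed.
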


\begin{proof}
Evident from the next observation: given a $k$-walk in $G$ between
its vertices $u$ and $v$:
\begin{displaymath}
u=v_0,v_1,\dots,v_k=v
\end{displaymath}
We have that
\begin{displaymath}
(u,a)=(v_0,a_0),(v_1,a_1),\dots,(v_k,a_k)=(v,b)
\end{displaymath}
with $a_i \in [r]$, is a $k$-walk in $B(G,r)$ for any choice of the
integers $a_i$. Thus if we fix the ends, i.e. if we fix $(u,a)$ and
$(v,b)$, we have that the number of $k$-walks between them in
$B(G,r)$ is $r^{k-1}w(G,k,u,v)$.
\end{proof}


Now we are going to proved that all the structural information of
$X_n$ is in $X_{\textsf{rad}(n)}$, actually we will prove that
$X_n=B(X_{\textsf{rad}(n)},{} n/\textsf{rad}(n))$.

For the next proposition we recall that for each vertex $x$ of a
graph $G$, the neighborhood of $x$ in $G$ is $N_G(x)=\{y \in V(G):
\{x,y\} \in E(G) \}$.

\begin{theorem}
Given $x,y \in \mathbb{Z}_n$ if $x \equiv y \bmod \textsf{rad}(n)$
then $x$ and $y$ are not neighbors in $X_n$ and they have the same
neighborhood in $X_n$: $N(x)=N(y)$
\end{theorem}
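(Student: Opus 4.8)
The plan is to verify the two assertions directly from the congruence $x\equiv y\pmod{\mathsf{rad}(n)}$ using the description of the edge set of $X_n$, namely $\{a,b\}\in E(X_n)$ iff $\gcd(a-b,n)=1$. The key arithmetic fact I will use throughout is that for any integer $m$, $\gcd(m,n)=1$ if and only if $p\nmid m$ for every prime $p\mid n$, equivalently $\gcd(m,\mathsf{rad}(n))=1$; so divisibility questions about $n$ reduce to divisibility questions about $\mathsf{rad}(n)$.

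First I would show $x$ and $y$ are not neighbors. Write $x-y=t\cdot\mathsf{rad}(n)$ for some integer $t$ (this is exactly what the hypothesis says). Pick any prime $p\mid n$; then $p\mid\mathsf{rad}(n)\mid(x-y)$, so $p\mid\gcd(x-y,n)$, hence $\gcd(x-y,n)\ge p>1$ and $\{x,y\}\notin E(X_n)$. (If $x=y$ this is immediate since $X_n$ has no loops, but the argument above covers that case too once we note $\mathsf{rad}(n)>1$.) Next, to show $N(x)=N(y)$, I would show $N(x)\subseteq N(y)$; the reverse inclusion is symmetric. Take $z\in N(x)$, so $\gcd(z-x,n)=1$. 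Then $z-y=(z-x)+(x-y)=(z-x)+t\cdot\mathsf{rad}(n)$. For every prime $p\mid n$ we have $p\nmid(z-x)$ while $p\mid t\cdot\mathsf{rad}(n)$, so $p\nmid(z-y)$; since this holds for all primes dividing $n$, $\gcd(z-y,n)=1$, i.e. $z\in N(y)$. Hence $N(x)\subseteq N(y)$, and by symmetry $N(x)=N(y)$.

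The only point requiring any care is the modular bookkeeping: one must be sure that adding a multiple of $\mathsf{rad}(n)$ cannot change whether a prime divisor of $n$ divides the difference, and the reduction of $\gcd(\cdot,n)=1$ to "no prime divisor of $n$ divides it" makes this transparent. So there is essentially no obstacle; the proof is a short direct computation. I would then remark that, combined with Proposition~\ref{PWB}, this identifies $X_n$ with the blow-up $B(X_{\mathsf{rad}(n)},\,n/\mathsf{rad}(n))$: the $\mathsf{rad}(n)$ residue classes modulo $\mathsf{rad}(n)$ partition $V(X_n)$ into $\mathsf{rad}(n)$ blocks of size $n/\mathsf{rad}(n)$, each block being an independent set with all its vertices sharing a common neighborhood, and two vertices are adjacent in $X_n$ precisely when their residues modulo $\mathsf{rad}(n)$ are adjacent in $X_{\mathsf{rad}(n)}$.
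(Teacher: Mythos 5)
Your proof is correct and follows essentially the same route as the paper: both arguments verify the two claims directly from the edge characterization $\gcd(a-b,n)=1$, using the fact that coprimality with $n$ depends only on the primes dividing $n$ (equivalently on $\mathsf{rad}(n)$), so that adding a multiple of $\mathsf{rad}(n)$ cannot affect adjacency. Your prime-by-prime bookkeeping is just a slightly more explicit version of the paper's appeal to $(a,b)=1 \iff (a,\mathsf{rad}(b))=1$.
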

\begin{proof}
The first statement is a clear consequence of the following
elementary facts:
    \begin{enumerate}
        \item In an unitary Cayley graphs two vertices $x$ and $y$
            are neighbors if and only if $x-y \in \mathbb{U}_n$.
        \item For any two integers $a$ and $b$, $(a,b)=1$ if
            and only if $(a,\textsf{rad}(b))=1$.
        \item For any two integers $a$ and $b$, if $a \equiv 0 \bmod
        b$, then $a \equiv 0 \bmod \textsf{rad}(b)$, i.e., $(a,\textsf{rad}(b)) \neq 1$.
    \end{enumerate}

The set of neighbors of $x$ is
\begin{equation}
N(x):= \left\{ z \in \mathbb{Z}_n : z-x \in \mathbb{U}_n\right\}
\nonumber
\end{equation}
As $x \equiv y \bmod rad(n)$, then for all $z \in \mathbb{Z}_n$ such
that $z-x \in \mathbb{U}_n$ we have that $z-x \equiv z-y \bmod
rad(n)$, then $(z-y,rad(n))=1$, i.e., $z-y \in \mathbb{U}_n$, so
$N(x)=N(y)$.
\end{proof}

\begin{corollary}\label{CB}
For any positive integer $n$ we have that the unitary Cayley graph
$X_n$ is the blow-up of order $n/\textsf{rad}(n)$ of
$X_{\textsf{rad}(n)}$:
\begin{equation}
X_n=B \left( X_{\textsf{rad}(n)},\frac{n}{\textsf{rad}(n)}\right)
\end{equation}
\end{corollary}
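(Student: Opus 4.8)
The plan is to exhibit an explicit isomorphism $\phi\colon V(X_n)\to V\bigl(B(X_{\textsf{rad}(n)},n/\textsf{rad}(n))\bigr)$ and check that it preserves adjacency in both directions. Write $m=\textsf{rad}(n)$ and $r=n/m$, so that the target graph has vertex set $\mathbb{Z}_m\times[r]$, of cardinality $mr=n=|V(X_n)|$. Every $x\in\{0,1,\dots,n-1\}$ has a unique representation $x=qm+s$ with $0\le s<m$ and $0\le q<r$; I would set $\phi(x)=(s,\,q+1)$, which is visibly a bijection.

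Next I would verify that both $\phi$ and $\phi^{-1}$ carry edges to edges. Two vertices $x,x'$ are adjacent in $X_n$ precisely when $\gcd(x-x',n)=1$, and by the elementary fact (2) recorded in the proof of the previous theorem this is equivalent to $\gcd(x-x',\textsf{rad}(n))=\gcd(x-x',m)=1$. Since $x-x'\equiv s-s'\pmod m$, this holds iff $\gcd(s-s',m)=1$, i.e.\ iff $s$ and $s'$ are adjacent in $X_m=X_{\textsf{rad}(n)}$ (this forces $s\neq s'$, as $m>1$ excludes a loop). By the definition of the blow-up, $s$ and $s'$ being adjacent in $X_{\textsf{rad}(n)}$ is exactly the condition for $(s,q+1)$ and $(s',q'+1)$ to be adjacent in $B(X_{\textsf{rad}(n)},r)$, for arbitrary $q,q'$. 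Hence $\{x,x'\}\in E(X_n)$ if and only if $\{\phi(x),\phi(x')\}\in E\bigl(B(X_{\textsf{rad}(n)},r)\bigr)$, which is what is needed.

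Alternatively — and this is really just repackaging the preceding theorem — one can argue structurally: the residue classes modulo $\textsf{rad}(n)$ partition $V(X_n)$ into $\textsf{rad}(n)$ blocks of size $n/\textsf{rad}(n)$ each; the theorem shows each block is independent and all its vertices share one common neighborhood, which is the defining feature of a blow-up; and identifying each block to a single vertex yields $X_{\textsf{rad}(n)}$ because $\gcd(x-y,n)=1$ depends on $x,y$ only through their residues modulo $\textsf{rad}(n)$. In either approach the only point requiring attention is the bookkeeping that the equivalence $\gcd(\,\cdot\,,n)=1\Leftrightarrow\gcd(\,\cdot\,,\textsf{rad}(n))=1$ survives reduction modulo $\textsf{rad}(n)$; I do not expect a genuine obstacle here, since all the number-theoretic content has already been isolated in the three facts used to prove the previous theorem.
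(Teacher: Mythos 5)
Your proof is correct and follows essentially the same route as the paper, which simply declares the result ``obvious from the previous theorem and the definition of blow-up''; your explicit bijection $x\mapsto(x\bmod\textsf{rad}(n),\,q+1)$ and the check via $\gcd(x-x',n)=1\Leftrightarrow\gcd(x-x',\textsf{rad}(n))=1$ just fill in the details the paper leaves implicit. Your second, structural paragraph is precisely the paper's intended argument, so nothing further is needed.
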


\begin{proof}
Obvious from the previous theorem and definition of blow-up.
\end{proof}

Now we can set our main result:

\begin{theorem}\label{Main}
Given two positive integers $n$ and $k$, the number of $i-j$-walks
of length $k$ in $X_{n}$ is
    \begin{equation}
        w(X_{n},k,i,j)=\left(
        \frac{n}{\textsf{rad}(n)}\right)^{k-1}w(X_{\textsf{rad}(n)},k,i \bmod {\textsf{rad}(n)},j \bmod {\textsf{rad}(n)})
    \end{equation}
where $i \bmod {\textsf{rad}(n)}$ is the residue of $i$ module
$\textsf{rad}(n)$.
\end{theorem}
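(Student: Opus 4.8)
The plan is to combine Corollary~\ref{CB} with Proposition~\ref{PWB}, so the proof is essentially a two-line assembly once the identifications are made carefully. First I would invoke Corollary~\ref{CB} to write $X_n = B\left(X_{\textsf{rad}(n)}, n/\textsf{rad}(n)\right)$, setting $r := n/\textsf{rad}(n)$ and $G := X_{\textsf{rad}(n)}$. The vertex set of this blow-up is $V(G)\times[r]$, so I need a concrete bijection between $\mathbb{Z}_n$ and $\mathbb{Z}_{\textsf{rad}(n)}\times[r]$ under which the blow-up edge relation matches the edge relation of $X_n$; the natural choice sends $i\in\mathbb{Z}_n$ to the pair $(i \bmod \textsf{rad}(n), a)$ for an appropriate second coordinate $a$. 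Under this identification, a vertex $i$ of $X_n$ corresponds to $(u,a)$ with $u = i \bmod \textsf{rad}(n)$, and likewise $j$ corresponds to $(v,b)$ with $v = j \bmod \textsf{rad}(n)$.

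Then I would apply Proposition~\ref{PWB} directly: the number of $k$-walks in $B(G,r)$ between $(u,a)$ and $(v,b)$ is $r^{k-1}w(G,k,u,v)$, which is independent of the particular second coordinates $a,b$. Substituting $r = n/\textsf{rad}(n)$, $G = X_{\textsf{rad}(n)}$, $u = i\bmod\textsf{rad}(n)$, and $v = j\bmod\textsf{rad}(n)$ gives exactly
\[
    w(X_n,k,i,j) = \left(\frac{n}{\textsf{rad}(n)}\right)^{k-1} w\!\left(X_{\textsf{rad}(n)},k,\, i\bmod\textsf{rad}(n),\, j\bmod\textsf{rad}(n)\right),
\]
which is the claimed formula. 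Since $w$ counts walks via the adjacency matrix (Theorem~\ref{A}), and two vertices of $X_n$ lying over the same vertex of $X_{\textsf{rad}(n)}$ have identical rows in the adjacency matrix by the theorem preceding Corollary~\ref{CB}, the walk count genuinely depends only on the residues mod $\textsf{rad}(n)$, so the right-hand side is well-defined.

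The only real subtlety — and the step I would be most careful about — is verifying that the blow-up structure of Corollary~\ref{CB} is compatible with the residue map in the way the statement needs: namely that each fiber of the reduction map $\mathbb{Z}_n\to\mathbb{Z}_{\textsf{rad}(n)}$ has exactly $n/\textsf{rad}(n)$ elements (which is immediate since $\textsf{rad}(n)\mid n$) and that \emph{every} pair of vertices in distinct fibers over adjacent vertices of $X_{\textsf{rad}(n)}$ is adjacent in $X_n$ while pairs in the same fiber are non-adjacent. This is precisely the content of the theorem just before Corollary~\ref{CB} (equal neighborhoods and non-adjacency within a fiber), so no new work is needed; I would simply cite it. Everything else is the mechanical substitution above, so I do not anticipate any genuine obstacle.
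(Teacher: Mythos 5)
Your proposal is correct and follows exactly the paper's own argument: the paper proves Theorem~\ref{Main} by citing Corollary~\ref{CB} together with Proposition~\ref{PWB}, which is precisely your assembly. Your additional care in spelling out the identification of $\mathbb{Z}_n$ with $V(X_{\textsf{rad}(n)})\times[n/\textsf{rad}(n)]$ via the residue map only makes explicit what the paper leaves implicit.
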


\begin{proof}
Just use corollary \ref{CB} and proposition \ref{PWB}
\end{proof}


\section{Applications}

We give two direct applications of theorem \ref{Main}. The first one
to additive number theory and the second one to linear algebra.

First application: the multiplicative groups of units in the ring
$\mathbb{Z}_n$ of residue classes mod $n$ consists of the residues
$r$ mod $n$ with $(r,n)=1$. We use unitary Cayley graphs to
determine the number of representations of a fixed residue class mod
$n$ as the sum of $k$ units in $\mathbb{Z}_n$

In this section we are going to show that there exists an obvious
bijection between walks in $X_n$ and ordered sums of units in
$\mathbb{Z}_n$.

Let $k$ and $n$ be positive integers, and let $r$ be a residue class
of $\mathbb{Z}_n$. We call the set of all ordered $k-$sums of units
of $\mathbb{Z}_n$ that sum $r$ module $n$ as $S(n,k,r)$, so
\begin{equation}
S(n,k,r):= \left\{(u_1,u_2,\ldots,u_k) \in \mathbb{U}_{n}^{k}:\quad
u_1+u_2+\cdots+u_k \equiv r \bmod n \right\} \label{problema}
\end{equation}
The cardinality of the above set will be denote with $s(n,k,r):=|
S(n,k,r)|$ .

The next result says that each ordered $k$-sum of units of
$\mathbb{Z}_n$ that sums $r$ give us a $k$-walk between $0$ and $r$
in $X_n$, and conversely.

\begin{theorem}
For all positive integers $n$ and $k$, and for any residue class $r$
of $\mathbb{Z}_n$, we have that $s(n,k,r)=w(X_n,k,0,r)$.
\end{theorem}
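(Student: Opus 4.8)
The plan is to exhibit an explicit bijection between the set $S(n,k,r)$ of ordered $k$-sums of units summing to $r$ and the set of $k$-walks from $0$ to $r$ in $X_n$, and then invoke Theorem~\ref{A} to identify the latter count with $w(X_n,k,0,r)$.

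First I would recall how a $k$-walk from $0$ to $r$ in $X_n$ looks: it is a sequence $0 = v_0, v_1, \dots, v_k = r$ of vertices of $\mathbb{Z}_n$ such that consecutive vertices are adjacent, i.e.\ $v_i - v_{i-1} \in \mathbb{U}_n$ for every $i = 1, \dots, k$ (using the fact that in a unitary Cayley graph two vertices are neighbors exactly when their difference is a unit). Given such a walk, I would set $u_i := v_i - v_{i-1}$ for $i = 1, \dots, k$. Each $u_i$ is a unit by adjacency, and the telescoping sum gives $u_1 + u_2 + \cdots + u_k = v_k - v_0 = r - 0 = r \bmod n$, so $(u_1, \dots, u_k) \in S(n,k,r)$. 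This defines a map $\Phi$ from $k$-walks $0 \to r$ to $S(n,k,r)$.

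Next I would construct the inverse. Given $(u_1, \dots, u_k) \in S(n,k,r)$, define partial sums $v_0 := 0$ and $v_i := u_1 + \cdots + u_i$ for $i = 1, \dots, k$. Then $v_i - v_{i-1} = u_i \in \mathbb{U}_n$, so consecutive $v$'s are adjacent in $X_n$, and $v_k = u_1 + \cdots + u_k \equiv r$, so $v_k = r$ in $\mathbb{Z}_n$; hence $0 = v_0, v_1, \dots, v_k = r$ is a genuine $k$-walk from $0$ to $r$. Call this map $\Psi$. A one-line check shows $\Phi$ and $\Psi$ are mutually inverse: starting from a walk, taking differences and then partial sums recovers the $v_i$; starting from a unit-tuple, taking partial sums and then differences recovers the $u_i$. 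Therefore $\Phi$ is a bijection, so $s(n,k,r) = |S(n,k,r)|$ equals the number of $k$-walks from $0$ to $r$ in $X_n$, which by Theorem~\ref{A} (the $(0,r)$ entry of $A(X_n)^k$) is exactly $w(X_n,k,0,r)$.

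I do not expect any serious obstacle here; the statement is essentially a reformulation, and the only points requiring a word of care are (i) the identification of edges of $X_n$ with differences lying in $\mathbb{U}_n$, and (ii) making sure the correspondence is between \emph{ordered} tuples and walks (which it is, since a walk records its steps in order). The proof is short enough that I would simply write out $\Phi$, write out $\Psi$, note they are mutually inverse, and conclude.
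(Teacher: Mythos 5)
Your proposal is correct and follows essentially the same route as the paper: the paper also sets up the correspondence by sending a unit-tuple to its sequence of partial sums starting at $0$ and, conversely, reading the units off the edges of a $0$--$r$ walk. Your write-up is if anything slightly more careful, since you explicitly verify that the two maps are mutually inverse rather than just exhibiting both directions.
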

\begin{proof}
Given $(u_1,u_2,\ldots,u_k) \in S(n,k,r)$, we can obtain a $k$-walk
from $0$ to $r$ in $X_n$ as follow:
    \begin{equation}
    0,u_1,0+u_1,u_2,0+u_1+u_2, \dots,u_k,0+u_1+u_2+\cdots+u_k=r
    \nonumber
    \end{equation}
Conversely, if we have a $k$-walk from $0$ to $r$ in $X_n$:
    \begin{equation}
    0,u_1,x_1,u_2,x_2, \dots,u_k,x_k=r
    \nonumber
    \end{equation}
by definition of $X_n$ for each $i \in {1,\ldots,k}$
    \begin{equation}
    x_i =0+u_1+u_2+\cdots+u_i
    \nonumber
    \end{equation}
Thus $u_1+u_2+\cdots+u_k = x_k=r$, and the edges of the $k$-walk
over $X_n$ gives an ordered $k$-sum of units of $\mathbb{Z}_n$ that
sums $r$.
\end{proof}

These result is a generalization of part of a recent work by Sander
(2009) \cite{S}, who proved it in the case $k=2$.

In particular we have that for any two positive integers $n$ and
$k$, the number of ordered homogeneous $k$-sums in $\mathbb{Z}_{n}$,
is
\begin{equation}
s(n,k,0)=\left( \frac{n}{rad(n)}\right)^{k-1}
        \prod_{p|n}{\frac{p-1}{p}\left( \left(p-1
        \right)^{k-1}-\left( -1 \right)^{k-1} \right)}
\end{equation}

This last result, could be prove without using unitary Cayley
graphs, via a recursive argument.

Second application: Circulant matrices arise, for example, in
applications involving the discrete Fourier transformation and the
study of cyclic codes for error correction \cite{D}.

In Rimas\cite{R1} (2005), \cite{R2} (2005), \cite{R3} (2006),
\cite{R4} (2006), and K\"{o}ken and Bozkurt \cite{KB} (2011) these
authors have studied positive integer powers for circulant matrices
of type $\textsf{circ}(0,a,0,\dots,b)$.

Now, using that power of circulat matrices are circulant, see
\cite{D}, and theorem \ref{Main} we can easily derive a general
expression for the arbitrary positive powers of circulant matrices
that are adjancency matrices of unitary Cayley Graphs, by a very
different technique for that used in the works by Rimas, or
K\"{o}ken and Bozkurt.

From theorems \ref{Main} and \ref{A}:

\begin{proposition}
Given $n,k \in \mathbb{Z}$, we have that
\begin{displaymath}
A(X_{n})^k=\textsf{circ}(w(X_{n},k,0,0),w(X_{n},k,0,1),\dots,w(X_{n},k,0,n))
\end{displaymath}
\end{proposition}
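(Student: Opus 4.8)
The plan is to combine the two results that the proposition explicitly cites, namely Theorem~\ref{A} and Theorem~\ref{Main}, together with the standard structural fact about circulant matrices. First I would recall that the vertex set of $X_n$ is $\mathbb{Z}_n = \{0,1,\dots,n-1\}$ and that $X_n$ is a Cayley graph on the abelian group $\mathbb{Z}_n$ with connection set $\mathbb{U}_n$; consequently its adjacency matrix is a circulant matrix, since the $(i,j)$-entry depends only on $i-j \bmod n$. Because the product of circulant matrices is again circulant (see \cite{D}), every power $A(X_n)^k$ is circulant, hence it is completely determined by its first row $\bigl( (A(X_n)^k)_{0,0}, (A(X_n)^k)_{0,1}, \dots, (A(X_n)^k)_{0,n-1} \bigr)$.

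Next I would invoke Theorem~\ref{A}: the $(0,j)$-entry of $A(X_n)^k$ is exactly the number of walks of length $k$ from vertex $0$ to vertex $j$ in $X_n$, which is by definition $w(X_n,k,0,j)$. Substituting this identification into the first row gives
\begin{displaymath}
A(X_n)^k = \textsf{circ}\bigl( w(X_n,k,0,0), w(X_n,k,0,1), \dots, w(X_n,k,0,n-1) \bigr),
\end{displaymath}
which is the claimed formula (with the harmless abuse of writing $n$ for the last index $n-1$, or equivalently using that $w(X_n,k,0,n) = w(X_n,k,0,0)$ by periodicity). If one wants the entries in the explicit closed form, one can further apply Theorem~\ref{Main} to rewrite each $w(X_n,k,0,j)$ in terms of $w(X_{\textsf{rad}(n)},k, 0, j \bmod \textsf{rad}(n))$ and the earlier product formula, but this is not needed for the statement as written.

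There is essentially no obstacle here: the proposition is a direct corollary packaging two already-established facts with the elementary observation that adjacency matrices of Cayley graphs on cyclic groups are circulant. The only point requiring a word of care is the indexing convention at the boundary (the appearance of $w(X_n,k,0,n)$ versus $w(X_n,k,0,n-1)$), which is resolved by noting that walk counts in $X_n$ depend on the endpoints only through their residues modulo $n$, so the two expressions agree. Accordingly I would keep the proof to one or two sentences, mirroring the brevity of the preceding proofs in the paper, and simply write something like: ``Immediate from Theorems~\ref{A} and~\ref{Main}, together with the fact that powers of circulant matrices are circulant \cite{D}.''
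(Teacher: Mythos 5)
Your argument is essentially the paper's own: the paper also obtains this proposition directly from Theorem~\ref{A} and Theorem~\ref{Main}, together with the fact (cited to Davis) that the adjacency matrix of $X_n$ is circulant and powers of circulant matrices are circulant. Your additional remark about the indexing at the boundary ($w(X_n,k,0,n)$ versus $w(X_n,k,0,n-1)$) is a correct and harmless clarification of the paper's statement.
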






\begin{thebibliography} {9}



\bibitem{B} {\sc{N. Biggs}}, \textit{Algebraic graph theory}, Second Edition,
Chematical library. Cambridge University Press, 1993.

\bibitem{D} {\sc{J. P. Davis}}, \textit{Circulant Matrices}. Chelsea
Publishing, New York. 1994.

\bibitem{GR} {\sc{C. Godsil and R. Royle}}, {\em Algebraic graph theory}, Graduate Text in Mathematics. Springer, 2001.

\bibitem{HIK} \sc{R. Hammack, W. Imrich and S. Klav\v{z}ar}, {\em Handbook of Product
Graphs}. 2nd edition. CRC Press. 2011.

\bibitem{KS}  {\sc{W. Klotz and T. Sander}}, {\em Some properties of unitary Cayley graphs}, The Electronic Journal of Combiantorics
14 (2007), R45, pp. 1-12.

\bibitem{KB} F. K\"{o}ken and D. Bozkurt, \textit{Positive integer
powers for one type of odd order circulant matrices}. Applied
Mathematics and Computation 217 (2011) 4377-4381.

\bibitem{RV} H.N. Ramaswamy and C.R. Veena, \textit{On the Energy of Unitary
Cayley Graphs. }The Electronic Journal of Combinatorics 16 (2009),
\#N24

\bibitem{R1} J. Rimas, \textit{On computing of arbitrary positive
integer powers for one type of odd order symmetric circulant
matrices-I} Applied Mathematics and Computation 165 (2005) 137-141.

\bibitem{R2} J. Rimas, \textit{On computing of arbitrary positive
integer powers for one type of odd order symmetric circulant
matrices-II} Applied Mathematics and Computation 169 (2005)
1016-1027.

\bibitem{R3} J. Rimas, \textit{On computing of arbitrary positive
integer powers for one type of even order symmetric circulant
matrices-I} Applied Mathematics and Computation 172 (2006) 86-90.

\bibitem{R4} J. Rimas, \textit{On computing of arbitrary positive
integer powers for one type of even order symmetric circulant
matrices-II} Applied Mathematics and Computation 174 (2006) 511-552.

\bibitem{S} J. W. Sander, \textit{On the addition of units and nonunits mod
m}, Journal of Number theory 129 (2009) 2260-2266.

\bibitem{St} {\sc{R. P. Stanley}} {\em Topic in Algebraic
Combinatorics}. Preliminary version 24th April 2010.

\end{thebibliography}
\end{document}